\newtheorem{theorem}{Theorem}
\newtheorem{corollary}[theorem]{Corollary}
\newtheorem{remark}[theorem]{Remark}
\newtheorem{example}[theorem]{Example}
\newcommand{\zb}{\overline{z}}
\newcommand{\aone}{\overline{a_1}}
\newcommand{\atwo}{\overline{a_2}}
\newcommand{\athree}{\overline{a_3}}
\newcommand\s[1]{\sigma_{#1}}
\newcommand\sbar[1]{\overline{\sigma_{#1}}}
\title{Interior and exterior curves of finite Blaschke products}
\author{Masayo Fujimura
        \thanks{This work was partially supported by JSPS KAKENHI 
                Grant Number JP15K04943.}}
\affil{Department of Mathematics, National Defense Academy\\ 
       Yokosuka 239-8686, Japan}
\begin{document}
\maketitle

\begin{abstract}
For a Blaschke product $ B $ of degree $ d $ and $ \lambda $ on
$ \partial\mathbb{D} $, let $ \ell_{\lambda} $ be the set
of lines joining each distinct two preimages in $ B^{-1}(\lambda) $.
The envelope of the family of lines 
$ \{\ell_{\lambda}\}_{\lambda\in\partial\mathbb{D}} $
is called the interior curve associated with $ B $. 
In 2002, Daepp, Gorkin, and Mortini proved the interior curve 
associated with a Blaschke product of degree 3 forms an ellipse.
While let $ L_{\lambda} $ be the set of lines tangent to 
$ \partial{\mathbb{D}} $ at the $ d $ preimages $ B^{-1}(\lambda) $ and 
the trace of the intersection points of each two elements in 
$ L_{\lambda} $ as $ \lambda $ ranges over the unit circle is called the
exterior curve associated with $ B $.
In 2017, the author proved the exterior curve associated with a 
Blaschke product of degree 3 forms a non-degenerate conic.

In this paper, for a Blaschke product of degree $ d $, 
we give some geometrical properties that lie between
the interior curve and the exterior curve.
\end{abstract}

\noindent{\bf Keywords} \quad Complex analysis, Blaschke product,
                              Algebraic curve, Dual curve

\bigskip

\noindent{\bf MSC} \quad 30C20, 30J10

\section{Introduction}
A {\it Blaschke product} of degree $ d $ is a rational function 
defined by
\begin{equation}\label{eq:B}
     B(z)=e^{i\theta}\prod_{k=1}^{d}\frac{z-a_k}{1-\overline{a_k} z} \qquad
             (a_k\in \mathbb{D},\ \theta\in\mathbb{R}). 
\end{equation}
In the case that $ \theta=0 $ and $ B(0)=0 $, 
$ B $ is called {\it canonical}. 

For a Blaschke product \eqref{eq:B} of degree $ d $, set
$$
    f_1(z)=e^{-\frac{\theta}{d}i}z, \quad \mbox{and} \quad
    f_2(z)=\frac{z-(-1)^da_1\cdots a_de^{i\theta}}
                {1-(-1)^d\overline{a_1\cdots a_de^{i\theta}}z}.
$$
Then, the composition $ f_2\circ{B}\circ f_1 $ is a canonical one,
and geometrical properties with respect to preimages 
of these two Blaschke products $ B $ and $ f_2\circ{B}\circ f_1 $
are same.
Hence, we only need to consider a canonical Blaschke product 
for the following discussions.
Moreover, the derivative of a Blaschke product has no zeros on
$ \partial\mathbb{D} $. For instance, see \cite{Inner}.
Hence, there are $ d $ distinct preimages 
of $ \lambda\in\partial\mathbb{D} $ by $ B $.

Let $ z_1,\cdots,z_d $ be the $ d $ distinct preimages of 
$ \lambda\in\partial\mathbb{D} $ by $ B $,
and $ \ell_{\lambda} $ the set of lines joining $ z_j $ and $ z_k $
$ (j\neq k) $.
Here, we consider the family of lines
$$ {\cal L}_B=\{\ell_{\lambda}\}_{\lambda\in\mathbb{D}}, $$
and the envelope $ I_B $ of $ {\cal L}_B $.
We call  the envelope $ I_B $ the {\it interior curve associated with $ B $}.

For a Blaschke product of degree $ 3 $, 
the interior curve forms an ellipse \cite{daepp} 
and corresponds to the inner ellipse of Poncelet's theorem (cf. \cite{flatto}).

\begin{theorem}[U.~Daepp, P.~Gorkin, and R.~Mortini \cite{daepp}]
  \label{thm:DGM} \quad
  Let $ B $ be a canonical Blaschke product of degree $3$ with zeros
  $ 0,\,a_1 $, and $ a_2 $.
  For $ \lambda\in\partial\mathbb{D} $,
  let $ z_1,z_2$, and $ z_3 $ denote the points mapped to 
  $ \lambda $ under $ B $, and write
  \begin{equation}\label{eq:partial}
     F(z)=\frac{B(z)/z}{B(z)-\lambda}
       =\frac{m_1}{z-z_1}+\frac{m_2}{z-z_2}+\frac{m_3}{z-z_3}. 
  \end{equation}
  Then the lines joining $ z_1 $ and $ z_2 $ is
  tangent to the ellipse $ E $ with equation
  \begin{equation}\label{eq:DGM}
      |z-a_1|+|z-a_2|=|1-\aone a_2| 
  \end{equation}
  at the point $ \zeta_3=\dfrac{m_1z_2+m_2z_1}{m_1+m_2} $.
  Conversely, every point of $ E $ is the point of tangency with
  $ E $ of a line that passes through two distinct points $ z_1 $  
  and $ z_2 $ on  the unit circle for which $ B(z_1)=B(z_2) $.
\end{theorem}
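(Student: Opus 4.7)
My plan is to rewrite $F$ so that both $\zeta_3$ and the focal data of $E$ appear naturally, and then conclude via the reflection characterization of an ellipse's tangent line. Since $B(z)/z = (z-a_1)(z-a_2)\big/[(1-\overline{a_1}z)(1-\overline{a_2}z)]$ and $B(z)-\lambda$ has the same denominator with numerator $(z-z_1)(z-z_2)(z-z_3)$, the denominator cancels and
\[
    F(z) = \frac{(z-a_1)(z-a_2)}{(z-z_1)(z-z_2)(z-z_3)}.
\]
Reading off the residues, $m_j = (z_j-a_1)(z_j-a_2)\big/\prod_{k\ne j}(z_j-z_k)$; equivalently $m_j = \lambda/(z_j B'(z_j))$. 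Combined with the standard identity
\[
    \frac{z B'(z)}{B(z)} = \sum_{k}\frac{1-|a_k|^2}{|1-\overline{a_k}z|^2} > 0 \qquad (|z|=1),
\]
this shows $m_j\in\mathbb{R}_{>0}$, so $\zeta_3 = \tfrac{m_1}{m_1+m_2}z_2 + \tfrac{m_2}{m_1+m_2}z_1$ is a genuine convex combination and lies on the segment $[z_1, z_2]$.

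I then use the reflection property: a line $\ell$ is tangent at $P$ to the ellipse with foci $a_1, a_2$ and sum-of-distances constant $c$ exactly when the reflection $a_1^\ast$ of $a_1$ in $\ell$ satisfies that $a_2, P, a_1^\ast$ are collinear (with $P$ between them) and $|a_1^\ast - a_2| = c$. For a chord of $\partial\mathbb{D}$ through $z_1, z_2$, a direct computation gives
\[
    a_1^\ast = z_1 + z_2 - z_1 z_2\,\overline{a_1}.
\]
Expanding the identity $z(z-a_1)(z-a_2) - \lambda(1-\overline{a_1}z)(1-\overline{a_2}z) = (z-z_1)(z-z_2)(z-z_3)$ yields the Vieta relations $z_1 z_2 z_3 = \lambda$ and $z_1+z_2+z_3 = a_1+a_2+\lambda\overline{a_1 a_2}$. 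Substituting $z_1+z_2$ and $z_1 z_2$ into $a_1^\ast - a_2$ and exploiting $|\lambda|=|z_3|=1$ together with the Blaschke identity $B(z_3)=\lambda$ (in the form $\lambda z_3\overline{(z_3-a_1)(z_3-a_2)} = (z_3-a_1)(z_3-a_2)$), the remaining dependence on $\lambda$ and $z_3$ cancels, leaving $|a_1^\ast - a_2| = |1-\overline{a_1}a_2|$. A parallel collinearity check using the $m_j$-formula places $\zeta_3$ on the segment $[a_1^\ast, a_2]$, which completes the tangency argument.

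The main obstacle is the modulus identity above: the left-hand side depends on $\lambda$ and $z_3$ whereas the right-hand side does not, so verifying cancellation of all mixed terms is the delicate algebraic step and hinges essentially on the unit-modulus conditions $\overline{z_j}=1/z_j$ and $\overline{\lambda}=1/\lambda$. The converse statement then follows from Poncelet's closure theorem: once every three-preimage triangle inscribed in $\partial\mathbb{D}$ is known to circumscribe $E$, continuity of $\zeta_3$ in $\lambda$ together with compactness of $E$ ensures that $\zeta_3$ sweeps out all of $E$ as $\lambda$ traces $\partial\mathbb{D}$, so every point of $E$ arises as the tangency point of some such chord.
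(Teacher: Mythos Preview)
The paper does not contain a proof of this theorem: it is quoted from Daepp, Gorkin, and Mortini \cite{daepp} as background, and the author moves on immediately to Siebeck's theorem and the exterior-curve construction without supplying an argument. So there is no ``paper's own proof'' to compare your proposal against.

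That said, your outline is essentially the classical reflection-property argument and it is sound. The key identity $|a_1^\ast - a_2| = |1-\overline{a_1}a_2|$ does indeed fall out once you substitute $z_1+z_2 = a_1+a_2+\lambda\overline{a_1a_2}-z_3$ and $z_1z_2 = \lambda\overline{z_3}$: writing $p=\overline{a_1}z_3$, $q=\overline{a_2}z_3$ and using $|z_3-a_k|=|1-\overline{a_k}z_3|$, all the $p,q$-dependent terms cancel in $|a_1^\ast-a_2|^2$, leaving $1+|a_1a_2|^2-2\operatorname{Re}(\overline{a_1}a_2)$. So the ``delicate algebraic step'' you flag is genuine but short. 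The positivity of the $m_j$ via $zB'(z)/B(z)>0$ on $\partial\mathbb{D}$ is exactly the lemma the paper alludes to (\cite[Lemma~4]{daepp}). One small caution: your converse does not actually require Poncelet's closure theorem; continuity of $\lambda\mapsto\zeta_3(\lambda)$ on $\partial\mathbb{D}$ together with the fact that each of the three tangency points traces a connected arc covering $E$ is already enough, and invoking Poncelet here is circular in spirit since the Daepp--Gorkin--Mortini theorem is often used to \emph{prove} the $n=3$ Poncelet closure for circle-inscribed triangles.
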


This result reminds us of the following classical result in Marden's book
\cite{marden} that was proved first by Siebeck \cite{siebeck}.

\begin{theorem}[Siebeck \cite{siebeck}]\label{thm:marden1}
   The zeros $ z_1' $ and $ z_2' $ of the function 
   $$  F(z)=\frac{m_1}{z-z_1}+\frac{m_2}{z-z_2}+\frac{m_3}{z-z_3} \
       \Big(=\frac{n(z-z_1')(z-z_2')}{(z-z_1)(z-z_2)(z-z_3)}\Big) $$
   are the foci of the conic which touches the line segments 
   $ z_1z_2,\ z_2z_3 $ and $ z_3z_1 $ in the points
   $ \zeta_3,\zeta_1 $, and $ \zeta_2 $ that divide these segments in
   the ratios $ m_1:m_2,\ m_2:m_3 $ and $ m_3:m_1 $, respectively.
   If $ n=m_1+m_2+m_3\neq0 $, the conic is an ellipse or hyperbola 
   according as $ nm_1m_2m_3>0 $ or $ <0  $.
\end{theorem}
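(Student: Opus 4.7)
The plan is to extract the focal reflection data directly from the partial-fraction identity. Starting with $\zeta_3=(m_1z_2+m_2z_1)/(m_1+m_2)$, a short calculation gives $\zeta_3-z_1=m_1(z_2-z_1)/(m_1+m_2)$ and $\zeta_3-z_2=-m_2(z_2-z_1)/(m_1+m_2)$, so the first two summands of $F(\zeta_3)=\sum_j m_j/(\zeta_3-z_j)$ cancel and $F(\zeta_3)=m_3/(\zeta_3-z_3)$. Combining this with the factored form $F(z)=n(z-z_1')(z-z_2')/\prod_j(z-z_j)$ and clearing the common denominator at $\zeta_3$ yields
\begin{equation}\label{eq:focalprod}
(\zeta_3-z_1')(\zeta_3-z_2')=\frac{m_3}{n}(\zeta_3-z_1)(\zeta_3-z_2)=-\frac{m_1m_2m_3}{n(m_1+m_2)^2}(z_2-z_1)^2.
\end{equation}
Consequently the ratio $(\zeta_3-z_1')(\zeta_3-z_2')/(z_2-z_1)^2$ is real, which in complex-geometric terms says that the segments $\zeta_3z_1'$ and $\zeta_3z_2'$ make equal angles with the line through $z_1$ and $z_2$---the focal reflection condition for tangency at $\zeta_3$.

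The analogous identities at $\zeta_1$ and $\zeta_2$ follow by cyclic permutation, so each side of the triangle $z_1z_2z_3$ is tangent at the claimed dividing point to some conic with foci $z_1'$ and $z_2'$. To identify these as tangencies to a single inscribed conic, I would supplement \eqref{eq:focalprod} with the vertex identity obtained from the residue of $F$ at $z_j$,
$$(z_j-z_1')(z_j-z_2')=\frac{m_j}{n}\prod_{k\neq j}(z_j-z_k),$$
whose reality expresses the isogonal-conjugate property at each vertex: the two sides of the triangle issuing from $z_j$ are equally inclined to the focal chords $z_jz_1'$ and $z_jz_2'$. This is the tangent-pair-from-an-external-point criterion and, combined with the existence of a unique inscribed conic having the prescribed tangent points (the Ceva ratios $m_1/m_2,\,m_2/m_3,\,m_3/m_1$ multiply to $1$), pins the foci of that conic to $(z_1',z_2')$.

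Finally, the ellipse/hyperbola dichotomy is read off from the sign in \eqref{eq:focalprod}: a negative value of the real ratio places $z_1'$ and $z_2'$ on the same side of $z_1z_2$, matching the elliptic reflection picture, while the opposite sign yields the hyperbola case. Since that sign is $-\mathrm{sgn}(nm_1m_2m_3)$, the conic is an ellipse exactly when $nm_1m_2m_3>0$. I expect the main obstacle to be the passage from the pointwise reflection identities to the existence of a single inscribed conic with the given foci; the vertex identity is the ingredient that bridges that gap, but turning it into a rigorous argument requires invoking the uniqueness of the inscribed conic tangent to three given lines at three prescribed points.
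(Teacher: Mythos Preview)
The paper does not prove this theorem at all: it is quoted as Siebeck's classical result, with a reference to \cite{siebeck} and Marden's book \cite{marden}, and is used only as background motivation for Theorem~\ref{thm:DGM}. There is therefore no in-paper proof to compare your proposal against.

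On its own merits, your sketch follows the classical argument found in Marden. The cancellation at $\zeta_3$ and the identity \eqref{eq:focalprod} are correct, and the interpretation of the reality of $(\zeta_3-z_1')(\zeta_3-z_2')/(z_2-z_1)^2$ as the equal-angle (focal reflection) property is the right geometric reading. The residue identity at each vertex likewise gives the isogonal-conjugate condition you state. You are candid about the remaining gap: going from three separate reflection conditions to a single inscribed conic with foci $z_1',z_2'$ needs an extra uniqueness or consistency argument. In Marden's treatment this is handled by first invoking the existence of a conic with foci $z_1',z_2'$ tangent to one side, and then using the isogonal property at the adjacent vertex to propagate tangency to the next side; your Ceva remark is in the same spirit but would need to be made precise. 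The sign analysis for the ellipse/hyperbola dichotomy is also a bit loose: a negative real value of $(\zeta_3-z_1')(\zeta_3-z_2')/(z_2-z_1)^2$ does not by itself say the foci lie on the same side of the line---it says the two focal rays from $\zeta_3$ make supplementary (rather than equal) angles with the line, which is what distinguishes the elliptic from the hyperbolic reflection law. With that correction your conclusion that the conic is an ellipse iff $nm_1m_2m_3>0$ stands.
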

 
  For a Blaschke product $ B $ of degree 3, let $ z_1,z_2 $, and $ z_3 $ 
  be the preimages for some  $ \lambda \in\partial\mathbb{D} $ by $ B $
  and $ F $ be defined as \eqref{eq:partial}, then 
  the following folds (\cite[Lemma 4]{daepp}),
  $$ 
      m_1+m_2+m_3=1  \quad \mbox{and} \quad   
      0<m_j<1 \ \mbox{for} \ j=1,2,3.
  $$
  Theorem \ref{thm:DGM} asserts the existence of
  ``the common ellipse'' for a given Blaschke product,
   as long as the given three points $ z_1,z_2 $, and $ z_3 $ are the
   preimage for some $ \lambda $ on the unit circle.

  The ellipse \eqref{eq:DGM} is also related to the 
  numerical range of another specific matrix with eigenvalues 
  $ a_1 $ and $ a_2 $ of the non-zero zero points of $ B $.
  Gorkin and Skubak studied such relations (\cite{gorkin}).

Moreover, for a canonical Blaschke product $ B $ of degree $ 4 $ with 
zeros $ 0,a_1,a_2$, and $ a_3 $,
the interior curve associated with $ B $ is defined by the equation of
total degree 6 with respect to $ z $ and $ \zb  $.
The coefficients of $ z^6 $ and $ \zb^6 $ are
$$ 
      (\aone-\atwo )^2(\atwo -\athree )^2
     (\athree -\aone )^2 
     \quad \mbox{and}  \quad 
     (a_1-a_2)^2(a_2-a_3)^2(a_3-a_1)^2, 
$$
respectively, with mutually distinct $ a_1,a_2 $, and $ a_3 $.
The file size of a defining equation of this interior curve
is about 200Kb as a text file. See also \cite{fuji-cmft}.

Thus, it is not so easy to obtain the defining equation of
the interior curve by calculating the envelope of $ B $
of degree greater than 4.

\bigskip

Next, we consider the geometrical properties of 
Blaschke products outside the unit disk.
Let $ B $ be a canonical Blaschke product of degree $ d $.
For $ \lambda\in\partial\mathbb{D} $, let $ L_{\lambda} $ 
be the set of $ d $ lines tangent to 
$ \partial\mathbb{D} $ at the $ d $
preimages of $ \lambda\in\partial\mathbb{D} $ by $ B $.
Here, we denote by $ E_B $
the trace of the intersection points of 
each two elements in $ L_{\lambda} $ as $ \lambda $ ranges over
the unit circle.
We call the trace $ E_B $ the {\it exterior curve associated with $ B $}.

In \cite{fuji-circum}, we obtained the following.

\begin{theorem}[\cite{fuji-circum}]\label{thm:algd}
  Let $ B $ be a canonical Blaschke product of degree $ d $.
  Then, the exterior curve $ E_B $ is an algebraic curve of degree 
  at most $ d-1 $.
\end{theorem}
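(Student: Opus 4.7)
The plan is to characterize $w\in E_B$ by a polynomial condition on the two tangent points to $\partial\mathbb{D}$ from $w$, and then bound the total degree of the resulting equation in $w,\overline{w}$ via a Bezoutian identity.

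I begin with two elementary facts. The tangent line to $\partial\mathbb{D}$ at $z\in\partial\mathbb{D}$ has equation $\overline{z}\,w+z\,\overline{w}=2$, so the tangent lines at distinct $z_1,z_2\in\partial\mathbb{D}$ meet at $w=2z_1z_2/(z_1+z_2)$. Conversely, from an exterior point $w$ with $|w|>1$, the two tangent points to $\partial\mathbb{D}$ are the roots of $\overline{w}\,z^2-2z+w=0$, whence
\[
z_1+z_2=\frac{2}{\overline{w}},\qquad z_1z_2=\frac{w}{\overline{w}}.
\]
Therefore $w\in E_B$ if and only if the two tangent points from $w$ satisfy $B(z_1)=B(z_2)$ (their common value then automatically lies on $\partial\mathbb{D}$, since $|B(z_j)|=1$ whenever $|z_j|=1$).

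Next I write $B=P/Q$ with $P(z)=\prod_{k=1}^d(z-a_k)$ and $Q(z)=\prod_{k=1}^d(1-\overline{a_k}z)$, each of degree $d$. The condition $B(z_1)=B(z_2)$ becomes $P(z_1)Q(z_2)-P(z_2)Q(z_1)=0$. This numerator is antisymmetric in $(z_1,z_2)$, hence divisible by $z_1-z_2$, and the quotient
\[
S(z_1,z_2):=\frac{P(z_1)Q(z_2)-P(z_2)Q(z_1)}{z_1-z_2}
\]
is a symmetric polynomial (the Bezoutian of $P$ and $Q$). A direct leading-coefficient check shows that the numerator has degree exactly $d$ in each variable, so $S$ has degree at most $d-1$ in each of $z_1,z_2$ separately.

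The crux is to rewrite $S=T(s_1,s_2)$ with $s_1=z_1+z_2$, $s_2=z_1z_2$, and to bound the \emph{total} degree of $T$. Any symmetric monomial $z_1^a z_2^b+z_1^b z_2^a$ with $a\geq b\geq 0$ factors as $s_2^b\,p_{a-b}$, where $p_k=z_1^k+z_2^k$ is the $k$-th power sum; by Newton's identities, $p_k$ is a polynomial in $(s_1,s_2)$ of total degree $k$. Hence this symmetric monomial contributes a polynomial of total degree at most $b+(a-b)=a\leq d-1$, so $\deg T\leq d-1$. Substituting $s_1=2/\overline{w}$ and $s_2=w/\overline{w}$ and clearing denominators, the polynomial
\[
F(w,\overline{w}):=\overline{w}^{\,d-1}\,T\!\left(\frac{2}{\overline{w}},\frac{w}{\overline{w}}\right)
\]
sends each monomial $c_{ij}\,s_1^i s_2^j$ (with $i+j\leq d-1$) to $c_{ij}\,2^i\,w^j\,\overline{w}^{\,d-1-i-j}$, of total degree $d-1-i\leq d-1$. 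Hence $F$ has total degree at most $d-1$ in $w,\overline{w}$ and vanishes on $E_B$, establishing the theorem.

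The main delicate point is the passage from bidegree $(d-1,d-1)$ for $S$ to total degree $d-1$ for $T$; I would handle this with the Newton-identity argument above, and sanity-check the bound in the known cases $d=2$ (a line) and $d=3$ (the non-degenerate conic of \cite{fuji-circum}), where it is tight.
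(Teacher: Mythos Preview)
Your argument is correct and follows essentially the same route as the paper: cross-multiply $B(z_1)=B(z_2)$, factor out $z_1-z_2$, rewrite the symmetric quotient in terms of $z_1+z_2$ and $z_1z_2$, substitute $z_1+z_2=2/\overline w$, $z_1z_2=w/\overline w$, and clear denominators. The only difference is packaging: the paper performs the symmetrization by an explicit double-sum expansion that actually produces the defining equation \eqref{eq:algd} (which is needed later for Theorem~\ref{thm:dual} and the examples), whereas your Bezoutian/Newton-identity argument gives the degree bound more cleanly but does not yield the explicit equation.
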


The proof of Theorem \ref{thm:algd} is already described 
in \cite{fuji-circum}, but we will give an outline proof 
in section \ref{sec:2} in order to provide the defining equation of $ E_B $.

The following result comes to mind when we pay attention to the
degree of $ E_B $.
However, we remark that the degree of the exterior curve may degenerate
to less than $ d-1 $ (see Remark \ref{rem:degree}).

\begin{theorem}[Siebeck \cite{siebeck}]\label{thm:marden2}
  The zeros of the function
  $ 
      F(z)=\sum_{j=1}^{d}\frac{m_j}{z-z_j}\ (m_j\in\mathbb{R^*}), 
  $
  are the foci of the curve of class $ d-1 $ which touches each line-segment
  $ z_jz_k $ in a point dividing the line segment in the ratio $ m_j:m_k $.
\end{theorem}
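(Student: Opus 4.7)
My plan is to prove the theorem in two stages: first construct an inscribed curve $C$ of class $d-1$ that touches each chord $\overline{z_j z_k}$ at the required division point, then check via the projective definition of foci that these coincide with the zeros of $F$.

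For the existence in the first stage, I would pass to the dual projective plane, where a class-$(d-1)$ envelope corresponds to a degree-$(d-1)$ algebraic curve $C^{*}$. Writing $\zeta_{jk}=(m_j z_k+m_k z_j)/(m_j+m_k)$, the tangency of $C$ to the chord $\overline{z_j z_k}$ at $\zeta_{jk}$ translates into two conditions on $C^{*}$: the dual point of the chord lies on $C^{*}$, and the tangent direction of $C^{*}$ there is specified by $\zeta_{jk}$. The naive count of $\binom{d}{2}$ such tangencies is over-determined for $d\ge 3$, so existence hinges on the particular ratios $m_j:m_k$. I would construct $C^{*}$ explicitly as the image of a rational map $\mathbb{P}^{1}\to(\mathbb{P}^{2})^{*}$ built from the partial-fraction data of $F$, namely from $P(z)=n\prod_{k}(z-z_k')$ and $Q(z)=\prod_{j}(z-z_j)$ with $F=P/Q$, and verify the prescribed tangency conditions directly.

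For the identification of foci in the second stage, I would use the classical projective definition: $w\in\mathbb{C}$ is a focus of $C$ iff the two isotropic lines through $w$ (those toward the circular points $I_{\pm}=[1:\pm i:0]$) are tangent to $C$, equivalently iff their dual points lie on $C^{*}$. Substituting the parametrization of $C^{*}$ from the first stage into this condition and clearing denominators, the focal equation for $w$ should reduce to $\sum_{j}m_j/(w-z_j)=0$, i.e.\ to $F(w)=0$, and thus select exactly the $z_k'$.

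I expect the main obstacle to be the algebraic reductions in both stages, which rely on the residue identities for $F$ (in particular $\operatorname{Res}_{z_j}F=m_j$ and $\sum_{j}m_j=n$): both the compatibility of the over-determined tangency system and the focal condition appear to be the same algebraic ``miracle'' read in different coordinates. As a sanity check, specializing to $d=3$ should recover Theorem~\ref{thm:marden1} and hence Theorem~\ref{thm:DGM} in the Blaschke-product setting; an induction on $d$ via the degeneration $m_d\to 0$, which drives one zero of $F$ into the pole $z_d$ and collapses the envelope through $z_d$, should provide a further consistency check between the $d$-point and $(d-1)$-point inscribed curves.
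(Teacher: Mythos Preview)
The paper does not supply its own proof of this statement: Theorem~\ref{thm:marden2} is quoted as a classical result of Siebeck (with a reference to \cite{siebeck} and, implicitly, to Marden's book \cite{marden}), and is used only as background motivation alongside Theorem~\ref{thm:marden1}. There is therefore nothing in the paper to compare your proposal against.

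On the substance of your outline: the strategy you describe---pass to line coordinates, realise the class-$(d-1)$ curve in the dual plane, and then characterise foci via tangency of the isotropic lines through the circular points $[1:\pm i:0]$---is exactly the classical route taken in the literature (see Marden's \emph{Geometry of Polynomials}). The genuine content, however, lies in the step you leave unspecified: the explicit rational parametrisation of $C^{*}$ ``built from the partial-fraction data of $F$''. Until you write down that map and verify that (i) its image has the right degree, (ii) it meets the dual of each chord $\overline{z_jz_k}$ tangentially with the tangent direction corresponding to $\zeta_{jk}$, and (iii) the isotropic tangency condition collapses to $F(w)=0$, what you have is a plausible plan rather than a proof. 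The residue bookkeeping you mention is necessary but not the crux; the crux is producing the curve. Your proposed sanity checks (the $d=3$ specialisation and the $m_d\to 0$ degeneration) are sensible but do not substitute for the construction.
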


The main aim of this paper is to explore the relation between the 
geometrical properties of the interior curve and the exterior curve.
As the main theorem in this paper, 
we will show that the following result in section \ref{sec:3}.

\begin{theorem}\label{thm:dual}
  Let $ B $ be a canonical Blaschke product of degree $ d $,
  and $ E^*_B $ the dual curve of the homogenized exterior curve $ E_B $.
  Then, the interior curve is given by
  $$ I_B:\ u^*_B(-z)=0, $$
  where $ u^*_B(z)=0 $ is a defining equation of 
  the affine part of $ E^*_B $.
\end{theorem}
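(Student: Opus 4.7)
My plan reduces Theorem \ref{thm:dual} to the classical polar duality of the unit circle. The only computational step is the identity that, for distinct $z_1,z_2\in\partial\mathbb{D}$, the chord joining $z_1$ and $z_2$ is the polar line, with respect to $\partial\mathbb{D}$, of the intersection point of the tangent lines to $\partial\mathbb{D}$ at $z_1$ and $z_2$. A direct calculation gives the chord equation $z+z_1z_2\,\zb=z_1+z_2$, while solving the pair of tangent equations $\overline{z_j}\,z+z_j\,\zb=2$ yields the intersection point
\[
  w=\frac{2z_1z_2}{z_1+z_2},\qquad \overline{w}=\frac{2}{z_1+z_2}.
\]
Substituting, the chord equation becomes $\overline{w}\,z+w\,\zb=2$, which is exactly the polar of $w$ with respect to $\partial\mathbb{D}$. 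Letting $\lambda$ range over $\partial\mathbb{D}$ and $(z_j,z_k)$ over pairs of preimages, this identifies the family $\mathcal{L}_B$ of chords with the family of polars of the points of $E_B$. Therefore $I_B$, being the envelope of $\mathcal{L}_B$, is the polar reciprocal curve of $E_B$ with respect to $\partial\mathbb{D}$.

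The remaining task is to translate ``polar reciprocal'' into the formula $I_B\colon u^*_B(-z)=0$. Let $\pi$ denote the polarity induced by $\partial\mathbb{D}$. In the affine dual coordinates $(u,v)$ in which a dual point represents the line $uX+vY+1=0$, one checks that $\pi$ is the map $(X_0,Y_0)\mapsto(-X_0,-Y_0)$, because the polar of $(X_0,Y_0)$ is $X_0 X+Y_0 Y=1$, i.e.\ $(-X_0)X+(-Y_0)Y+1=0$. Since $\pi$ is a symmetric involution, duality commutes with it: for every plane curve $C$ one has $(\pi(C))^*=\pi(C^*)$. Combined with the first step, which says $I_B$ is the envelope-curve whose dual in the dual plane is exactly $\pi(E_B)$, this gives
\[
  I_B=(\pi(E_B))^*=\pi(E_B^*).
\]
Reading off the equation: if $E_B^*$ has affine defining polynomial $u^*_B(u,v)$, then $\pi(E_B^*)$ is defined by $u^*_B(-X,-Y)=0$, which in complex notation is $u^*_B(-z)=0$.

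The main care needed in this plan lies in the coordinate bookkeeping of Step~2: the minus sign in the theorem's conclusion is not a geometric asymmetry but the trace of the chosen normalization of dual affine coordinates (the line $uX+vY+1=0$ rather than $uX+vY=1$), so the proof should pin down this convention in line with the one used in Section \ref{sec:2} when writing $E_B$ and $E_B^*$. Once the convention is fixed, the identity $(\pi(C))^*=\pi(C^*)$ is a standard consequence of $\pi$ being self-dual, and the first (chord--pole) step is pure unit-circle geometry — nothing specific to Blaschke products enters beyond the fact that the preimages $z_1,\dots,z_d$ lie on $\partial\mathbb{D}$.
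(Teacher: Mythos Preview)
Your proof is correct and follows essentially the same route as the paper's: both reduce the statement to the pole--polar relation for the unit circle, computing the chord equation $\overline{\zeta}z+\zeta\overline{z}=2$ from $z'+z''=2/\overline{\zeta}$ and $z'z''=\zeta/\overline{\zeta}$, and then reading off that in affine dual coordinates the chord $\ell$ corresponds to the point $-\zeta$. The paper simply stops there and invokes ``the family of tangent lines of $I_B$ coincides with $\mathcal{L}_B$'', whereas you spell out the same conclusion via the polar-reciprocal formalism and the identity $(\pi(C))^*=\pi(C^*)$; this is a matter of presentation, not of method.
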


Moreover, as an application of this theorem, 
we construct examples of  Blaschke products
having two ellipses as the interior curve in section \ref{sec:exp}.

\section{Interior and exterior curves}\label{sec:2}

Although, the proof of Theorem \ref{thm:algd} is already described 
in \cite{fuji-circum},
in order to confirm the method of construction of the defining equation,
we provide an outline proof here.

\begin{proof}[Proof of Theorem \textup{\ref{thm:algd}}]
  Let 
  $  \displaystyle 
     B(z)=z\prod_{k=1}^{d-1}\frac{z-a_k}{1-\overline{a_k} z} \
    (a_k\in \mathbb{D}) $
  and written as follows 
  $$
    B(z)=
    \dfrac{z^d-\sigma_1z^{d-1}+\sigma_2z^{d-2}+\cdots+(-1)^{d-1}\sigma_{d-1}z}
     {1-\overline{\sigma_1}z+\cdots+(-1)^{d-1}\overline{\sigma_{d-1}}z^{d-1}},
  $$ 
  where $ \sigma_k $ are the elementary symmetric polynomials on 
  variables $ a_1,\cdots, a_{d-1} $ of degree $ k $  $(k=1,\cdots,d-1) $.
  Let $ \sigma_0=1 $ and $ \sigma_d=0 $.
  Eliminating $ \lambda $ from $ B(z_1)=B(z_2)=\lambda $, we have
  \begin{align*} 
     & \Big(z_1^d-\sigma_1z_1^{d-1}+\cdots+(-1)^{d-1}\sigma_{d-1}z_1\Big)
       \Big(1-\overline{\sigma_1}z_2+\cdots
            +(-1)^{d-1}\overline{\sigma_{d-1}}z_2^{d-1}\Big)\\ 
     & \qquad 
       -\Big(z_2^d-\sigma_1z_2^{d-1}+\cdots+(-1)^{d-1}\sigma_{d-1}z_2\Big)
       \Big(1+\overline{\sigma_1}z_1+\cdots
        +(-1)^{d-1}\overline{\sigma_{d-1}}z_1^{d-1}\Big)\\ 
     & = \sum_{j=1}^d\sum_{k=1}^d 
        (-1)^{j+k}\overline{\sigma_{d-j}}\sigma_{d-k}
               (z_1^kz_2^{d-j}-z_1^{d-j}z_2^k) \\
     & =\sum_{N=1}^d\sum_{K=0}^{N-1} 
            (-1)^{d-N+K}(\sigma_{d-N}\overline{\sigma_{K}}
               -\overline{\sigma_{N}}\sigma_{d-K})
                  (z_1z_2)^K(z_1^{N-K}-z_2^{N-K}) \\ 
     &= (z_1-z_2)\sum_{N=1}^d\sum_{K=0}^{N-1}
            (-1)^{d-N+K}(\sigma_{d-N}\overline{\sigma_{K}}
               -\overline{\sigma_{N}}\sigma_{d-K})
         (z_1z_2)^K\\ 
     &\quad
        \times\Big((z_1+z_2)^{N-K-1}-\gamma_1z_1z_2(z_1+z_2)^{N-K-3}+\cdots
                    +\gamma_M(z_1z_2)^M(z_1+z_2)^R\Big)=0,
  \end{align*}
  where $ R $ is the remainder after dividing $ N-K-1 $ by $ 2 $,
  $$
       M=\frac{N-K-1-R}{2} , \qquad  \gamma_1=N-K-2, 
  $$ 
  and $ \gamma_M  $ is a non-zero coefficient.
  The intersection point $ z $ of two lines $ l_1 $ and $ l_2 $
  satisfies
  \begin{equation}\label{eq:interd}
          z_1z_2=\dfrac{z}{\zb } \quad \mbox{and} \quad
          z_1+z_2=\dfrac{2}{\zb }, 
  \end{equation}
  since each $ l_k\ (k=1,2) $ is a line tangent to the unit circle 
  at a point $ z_k $.
  Note that the intersection point is the point at infinity if and only if
  $ z_1+z_2=0 $.
  Hence, we have
  \begin{align} \notag
      & \sum_{N=1}^d\sum_{K=0}^{N-1}
           (-1)^{d-N+K}\big(\sigma_{d-N}\overline{\sigma_{K}}
               -\overline{\sigma_{N}}\sigma_{d-K}\big)
          z^K\zb^{d-N} \\ \label{eq:algd}
      & \qquad \times \Big( 2^{N-K-1}-2^{N-K-3}\gamma_1z\zb 
            +\cdots + 2^R\gamma_Mz^M\zb^M\Big)=0.
  \end{align}
  This equality gives a defining equation of $ E_B $
  with degree at most $ d-1 $. 
\end{proof}

When the degree is low, we can describe the exterior curve concretely, 
as follows.

\begin{corollary}[\cite{fuji-circum}]  
  Let $ B $ be a canonical Blaschke product of degree $ d $ with
  zeros $ 0 $, $a_1,\cdots $, $a_{d-1} \in\mathbb{D} $.
  \begin{itemize}
     \item For a canonical Blaschke product of degree $ 2 $ with zeros
           $ 0 $ and $ a_1 (\neq0) $, 
           the exterior curve is the line $ \aone z+a_1\zb-2=0 $.
     \item For $ d=3 $, the exterior curve is either an ellipse, 
           a circle, a parabola, or a hyperbola. 
          \begin{equation}\label{env3}
             \aone \atwo z^2
             -(|a_1a_2|^2-|a_1+a_2|^2+1)z\zb +a_1a_2\zb^2
            -2(\aone +\atwo )z-2(a_1+a_2)\zb +4=0.
          \end{equation}
     \item For  $ d=4 $, the defining equation of the exterior curve
           is written as
         \begin{align} \notag
           & \sbar3z^3+(\s1\sbar2-\s2\sbar3-\sbar1)z^2\zb
             -(\s1-\s2\sbar1+\s3\sbar2)z\zb^2+\s3\zb^3 \\ \label{eq:env4gene}
           & -2\sbar2z^2-(2\s1\sbar1-2\s3\sbar3-4)z\zb
             -2\s2\zb^2+4\sbar1z+4\s1\zb-8=0,
        \end{align}%
         where $ \sigma_{k} $ are the elementary symmetric polynomials on
         three variables $ a_1,a_2,a_3 $ of degree $ k \ (k=1,2,3) $, i.e.
         $$ 
             \s1=a_1+a_2+a_3,\ \s2=a_1a_2+a_1a_3+a_2a_3 \quad \mbox{and}\quad
             \s3=a_1a_2a_3. 
         $$
   \end{itemize}
\end{corollary}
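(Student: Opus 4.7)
The plan is to apply the defining equation \eqref{eq:algd} from the proof of Theorem \ref{thm:algd}, specialized to $d = 2, 3, 4$. Since \eqref{eq:algd} already writes the exterior curve as a polynomial in $(z, \bar z)$ whose coefficients are polynomials in $\sigma_1, \ldots, \sigma_{d-1}$ and their conjugates (with $\sigma_d = 0$), each case reduces to a finite symbolic expansion and no new ingredients are needed.

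For $d = 2$ only three pairs $(N, K) \in \{(1, 0), (2, 0), (2, 1)\}$ arise, each carrying polynomial factor $1$ or $2$; the three monomial contributions $-a_1 \bar z$, $+2$, $-\bar a_1 z$ sum, after multiplication by $-1$, to the stated line. For $d = 3$ there are six pairs, and the only nontrivial polynomial factor is $4 - z \bar z$ coming from $(N, K) = (3, 0)$ (where $N - K - 1 = 2$ and $\gamma_1 = N-K-2 = 1$); all other factors equal $1$ or $2$. The coefficient of $z\bar z$ is then assembled from the pairs $(2, 1)$ and $(3, 0)$ as $|\sigma_1|^2 - |\sigma_2|^2 - 1 = -(|a_1a_2|^2 - |a_1+a_2|^2 + 1)$, and equation \eqref{env3} follows by collecting the remaining monomials.

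For $d = 4$ the same procedure is applied to the ten pairs $(N, K)$. Now three pairs contribute nontrivial polynomial factors: $(4, 0)$ contributes $8 - 4 z \bar z$ (from $N - K - 1 = 3$, $\gamma_1 = 2$), while $(3, 0)$ and $(4, 1)$ each contribute $4 - z \bar z$; the other seven pairs carry constant factors $1$ or $2$. After summing the ten contributions and multiplying through by $-1$ so that the leading term becomes $+\bar\sigma_3 z^3$, equation \eqref{eq:env4gene} emerges.

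The main obstacle is symbolic bookkeeping in the $d = 4$ case: a single monomial $z^i \bar z^j$ typically receives contributions from more than one pair $(N, K)$ via different powers of $z\bar z$ inside the polynomial factor, so a systematic tabulation by bidegree (or a brief computer-algebra verification) is advisable to avoid losing or double-counting terms.
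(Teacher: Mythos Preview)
Your proposal is correct and follows exactly the route the paper intends: the Corollary is stated without a separate proof and is meant to be read off from the general defining equation \eqref{eq:algd}, so specializing that formula to $d=2,3,4$ and collecting terms is precisely the argument. Your bookkeeping (signs, the polynomial factors $1,2,4-z\bar z,8-4z\bar z$, and the global sign flip needed for even $d$) checks out. One small point you pass over is the qualitative assertion for $d=3$ that the conic \eqref{env3} is nondegenerate (hence genuinely an ellipse, circle, parabola, or hyperbola); the paper does not prove this here either, deferring it to \cite{fuji-circum}, so this is not a gap relative to the paper, but you may wish to flag it.
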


Even if we use symbolic computation systems, it is hard to calculate 
the defining equation of the interior curve for $ d=5 $.
However, we can obtain the exterior curve as follows.

\begin{corollary}
  For a canonical Blaschke product of degree 5, 
  the defining equation of the exterior curve 
  \eqref{eq:algd} is written as
\begin{align} \notag
  & \sbar4z^4+(\s1\sbar3-\sbar2-\s2\sbar4)z^3\zb
    -(\s1\sbar1-\s2\sbar2+\s3\sbar3-\s4\sbar4-1)z^2\zb^2 \\ \notag
  & +(\s3\sbar1-\s4\sbar2-\s2)z\zb^3+\s4\zb^4
    -2\sbar3z^3+2(2\sbar1-\s1\sbar2+\s3\sbar4)z^2\zb\\ \notag
  & -2(\s2\sbar1-2\s1-\s4\sbar3)z\zb^2
    -2\s3\zb^3
    +4\sbar2z^2+4(\s1\sbar1-\s4\sbar4-3)z\zb \\
 & +4\s2\zb^2 -8\sbar1z-8\s1\zb+16=0,
\end{align}
  where $ \sigma_{k} $ are the elementary symmetric polynomials on
  four variables $ a_1,\cdots,a_4 $ of degree $ k \ (k=1,\cdots,4) $.
\end{corollary}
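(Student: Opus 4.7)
The plan is to specialize the master equation \eqref{eq:algd} established in the proof of Theorem \ref{thm:algd} to $d=5$, and then to collect monomials in $z$ and $\zb$ systematically. With $d=5$, the relevant symmetric functions are $\s0=1,\s1,\s2,\s3,\s4$ on the variables $a_1,a_2,a_3,a_4$, together with the convention $\s5=0$. The outer double sum of \eqref{eq:algd} ranges over the $15$ pairs $(N,K)$ with $1\le N\le 5$ and $0\le K\le N-1$; each such pair contributes a monomial $z^K\zb^{5-N}$ multiplied by the sign $(-1)^{5-N+K}$, the coefficient $\s{5-N}\sbar{K}-\sbar{N}\s{5-K}$, and an inner polynomial in $z\zb$ coming from the factorization of $z_1^{N-K}-z_2^{N-K}$.

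First I would precompute this inner polynomial $P_m(z\zb)$ for $m=N-K=1,\dots,5$. Writing $u_m=(z_1^m-z_2^m)/(z_1-z_2)$ as a polynomial in $s=z_1+z_2$ and $p=z_1z_2$ via the Chebyshev-like recursion $u_m=su_{m-1}-pu_{m-2}$, then substituting $s=2/\zb$ and $p=z/\zb$ from \eqref{eq:interd} and clearing $\zb^{m-1}$, one obtains
\[
  P_1=1,\quad P_2=2,\quad P_3=4-z\zb,\quad P_4=8-4z\zb,\quad P_5=16-12z\zb+z^2\zb^2,
\]
matching the general shape $2^{m-1}-2^{m-3}\gamma_1z\zb+\cdots$ appearing in \eqref{eq:algd}. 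Next, for each of the $15$ pairs I would form the full product and expand; finally I would regroup by monomials $z^i\zb^j$ with $0\le i,j\le 4$ to read off the coefficients of the asserted equation.

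The main obstacle is pure bookkeeping: the $15$ pairs produce roughly forty monomial contributions, several of which collapse onto the same $z^i\zb^j$ and must be combined carefully. In particular the coefficient of $z^2\zb^2$ pools contributions from the pairs $(3,2)$, $(4,1)$, and $(5,0)$ to give $-(\s1\sbar1-\s2\sbar2+\s3\sbar3-\s4\sbar4-1)$, while the coefficient of $z\zb$ combines $(4,1)$ and $(5,0)$ to yield $4(\s1\sbar1-\s4\sbar4-3)$. As consistency checks I would verify the extreme monomials $\sbar4z^4$ and $\s4\zb^4$ (which come trivially from the pairs $(5,4)$ and $(1,0)$ where $P_1=1$) and the constant $16$ (from $(5,0)$ with $P_5$), and exploit the involution $(N,K)\leftrightarrow(5-K,5-N)$ on the index set, under which a contribution is carried to its complex conjugate with $z$ and $\zb$ swapped. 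This symmetry, which is forced by the reality of any defining equation of $E_B$, halves the amount of independent computation needed.
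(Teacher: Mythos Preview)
Your proposal is correct and follows exactly the route the paper intends: the corollary is stated without a separate proof because it is obtained by specializing the general defining equation \eqref{eq:algd} from the proof of Theorem~\ref{thm:algd} to $d=5$ and collecting the monomials in $z,\zb$. Your computation of the auxiliary polynomials $P_m$ and the sample coefficients (in particular those of $z^2\zb^2$, $z\zb$, and the constant term) are all accurate, and the conjugation symmetry you invoke is a sound and helpful check.
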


\begin{remark}\label{rem:degree}
   For $ d=4 $, the degree of the exterior curve \eqref{eq:env4gene}
   is not greater than $ 2 $ if and only if the Blaschke product has a
   double zero point at the origin and the sum of the other zero 
   points equals to 0.
   While, the degree of the defining equation of 
   the exterior curve $ E_B $ is always $ 4 $ for every $ B $
   of degree 5.

   Even though we can obtain the defining equation of the exterior 
   curve concretely for $ d\geq 6$, 
   We abandon to describe it. 
   Because the size of the equation is relatively large.

\end{remark}

\begin{figure}[H]
  \centerline{
    \fbox{\includegraphics[width=0.4\linewidth]{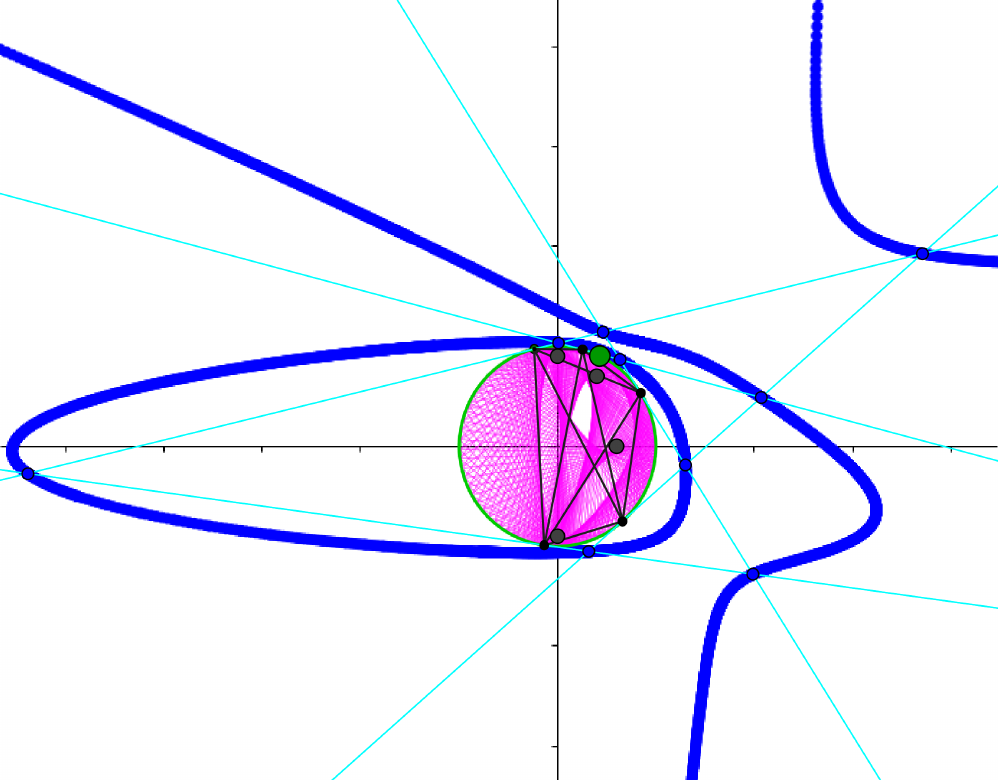}}\qquad
    \fbox{\includegraphics[width=0.4\linewidth]{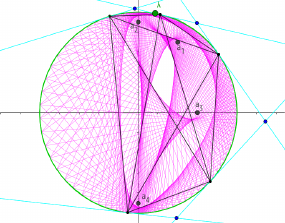}}
   }

  \centerline{
    \fbox{\includegraphics[width=0.4\linewidth]{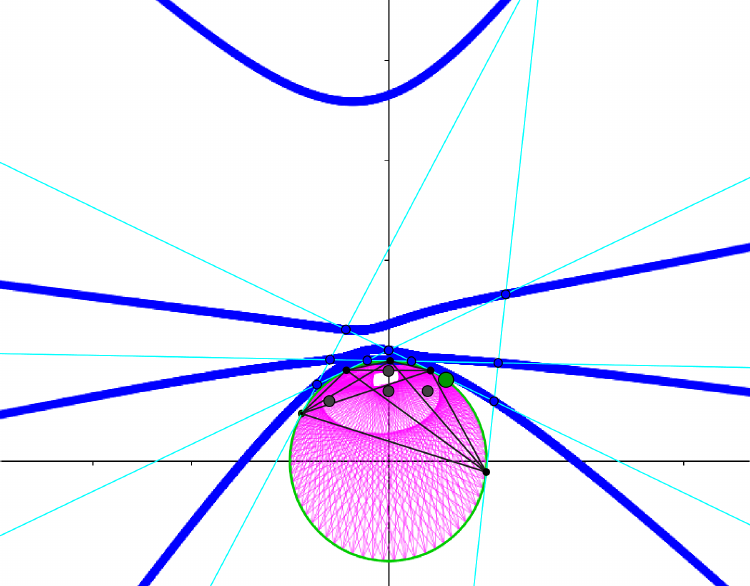}}\qquad
    \fbox{\includegraphics[width=0.4\linewidth]{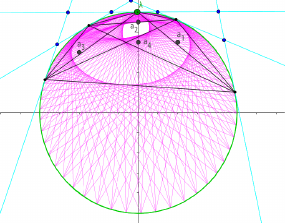}}
  }
  \caption{The thick curves indicate the exterior curves 
           for canonical Blaschke products with non-zero zero points 
           $ (a_1,a_2,a_3,a_4)= (0.4+0.7i,0.9i,0.6,-0.9i) $ (upper) and 
           $ (0.4+0.7i,0.9i,-0.6+0.6i,0.7i)$ (lower).
           The envelope in each right figure is 
           the interior curve corresponding to the left figure.}
  \label{pic:deg5-gene}
\end{figure}

\section{Proof of Theorem \ref{thm:dual}}\label{sec:3}

The affine part of the projective space
$ \mathbb{P}_2(\mathbb{R}) $ can be identified
with the complex plane $ \mathbb{C} $.

Recall that the dual curve $ C^* $ of $ C\subset \mathbb{P}_2(\mathbb{R}) $
is defined by
$$ C^*=\{L\in\mathbb{P}_2^*(\mathbb{R})\,;\,  L \mbox{ is a line tangent to } C
                       \mbox{ at some } p\in C\}. $$

\begin{proof}[Proof of Theorem \ref{thm:dual}]
  Let $ z' $ and $ z'' $ are two preimages for some 
  $ \lambda\in\partial\mathbb{D} $,
  and $ \ell $ is the line joining $ z' $ and $ z''$.
  Let $ \zeta $ be the intersection point of two lines tangent to
  the unit circle at the points $ z' $ and $ z'' $ (cf. Figure \ref{pic:pp}).
  Therefore the point $ \zeta $ is the pole and the line $ \ell $ 
  is its polar with respect to the unit circle.

\begin{figure}[htbp]
    \centerline{\includegraphics[width=0.3\linewidth]{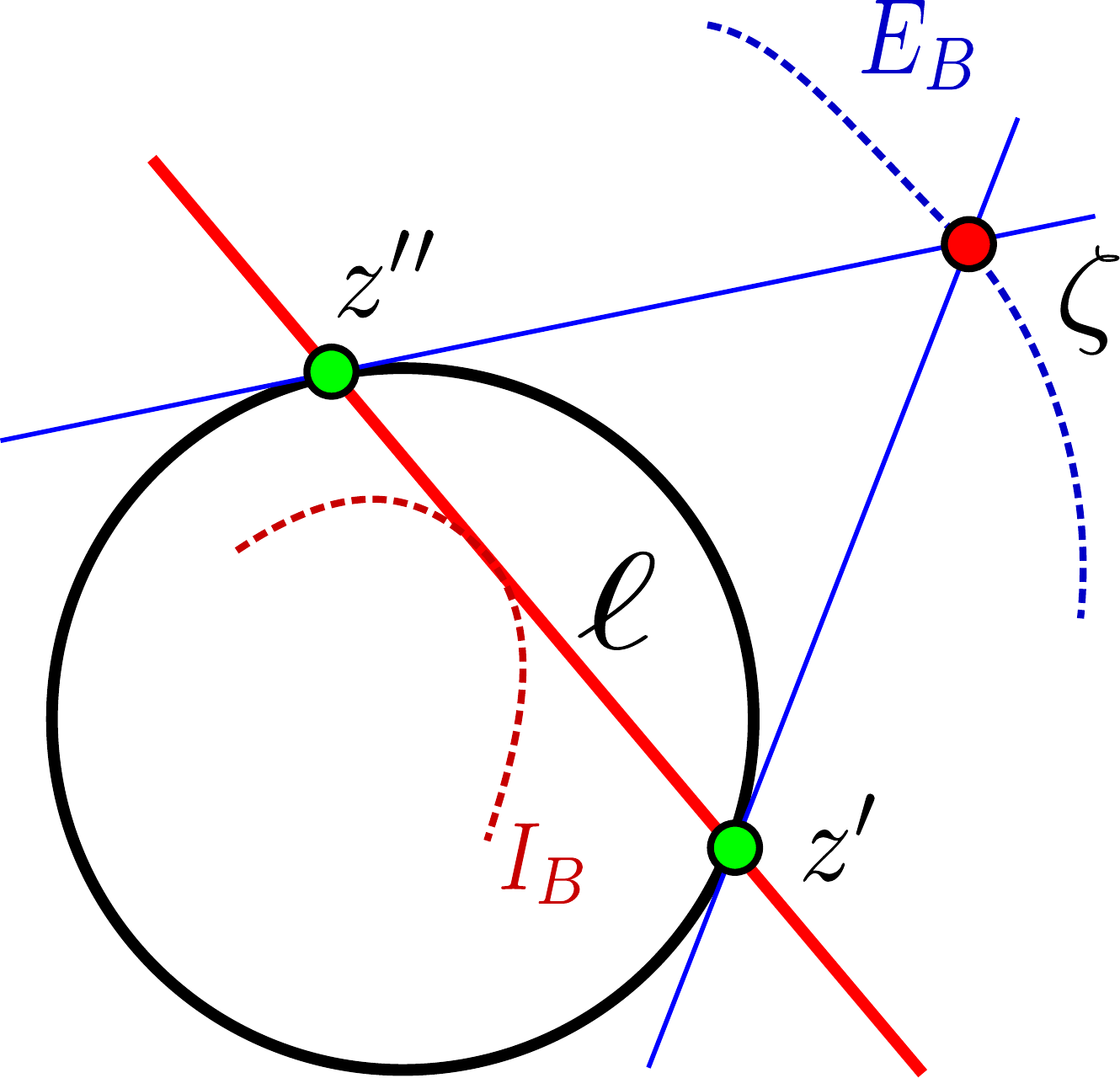}}
    \caption{The point $ \zeta $ is the pole and the line $ \ell $ 
             is its polar with respect to the unit circle.}
    \label{pic:pp}
\end{figure}

  Then, the equation of $ \ell $ is written as
  \begin{equation}\label{eq:ell}
          z+z'z'' \zb=z'+z''.
  \end{equation}
  The intersection point $ \zeta $ satisfies
  \begin{equation}\label{eq:pole}
      z'+z''=\frac2{\overline{\zeta}} \mbox{\qquad and\qquad }
      z'z''=\frac{\zeta}{\overline{\zeta}}.
  \end{equation}
  Substituting \eqref{eq:pole} into \eqref{eq:ell},
  the line $ \ell $ is written by the data of $ \zeta $ as follows,
  $$ 
      \overline{\zeta}z+\zeta\overline{z}=2. 
  $$
  Substituting $ \zeta=\alpha+\beta i $ and $ z=x+yi $ 
  into the above equality again, 
  the line $ \ell $ is expressed as the line on the real $ xy $-plane,
  $ \alpha x+\beta y-1=0. $
  Therefore the line 
  $ \ell\subset\mathbb{C}\subset \mathbb{P}_2(\mathbb{R}) $
  corresponds to the point 
  $ (-\alpha:-\beta:1) \in\mathbb{P}_2^*(\mathbb{R}) $, 
  and this point corresponds to the point $ -\zeta\in\mathbb{C} $.

  \smallskip

  Hence, the assertion is obtained from the fact that
  the family of all tangent lines of the interior curve $ I_B $ 
  coincides with the family of lines 
  $ \mathcal{L}_B=\{ \ell_{\lambda}\}_{\lambda} $.
\end{proof}

Equivalently, the converse also holds.

\begin{corollary}
  Let $ B $ be a canonical Blaschke product of degree $ d $,
  and $ I^*_B $ be the dual curve of the homogenized interior curve $ I_B $.
  Then, the exterior curve is given by
  $$ E_B:\ v^*_B(-z)=0, $$
  where $ v^*_B(z)=0 $ is a defining equation of 
  the affine part of $ I^*_B $.
\end{corollary}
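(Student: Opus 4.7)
The plan is to derive this corollary as a formal consequence of Theorem \ref{thm:dual} combined with the classical biduality principle $(C^*)^* = C$ for plane algebraic curves, so essentially no new geometric work is required.

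First I would reformulate Theorem \ref{thm:dual} geometrically. Let $ \phi:\mathbb{P}_2(\mathbb{R})\to\mathbb{P}_2(\mathbb{R}) $ denote the projective involution induced by $ z\mapsto -z $ on the affine chart $ \mathbb{C} $; in homogeneous coordinates it is $ (x:y:w)\mapsto(-x:-y:w) $. Theorem \ref{thm:dual} asserts that $ I_B=\phi(E^*_B) $. Applying the involution, this is equivalent to
$$ E^*_B=\phi(I_B). $$

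Next I would dualize both sides. Because $ \phi $ is a projective transformation and dualization is functorial, it suffices to note that the linear map induced by $ \phi $ on $ \mathbb{P}_2^*(\mathbb{R}) $ is given by the \emph{same} formula $ (\alpha:\beta:\gamma)\mapsto(-\alpha:-\beta:\gamma) $ on dual coordinates, so $ (\phi(C))^*=\phi(C^*) $ for any plane curve $ C $. Invoking biduality $ (E^*_B)^*=E_B $, I obtain
$$ E_B=(E^*_B)^*=(\phi(I_B))^*=\phi(I^*_B). $$
Unwinding the meaning of $ \phi $ in affine coordinates, this says precisely that $ E_B $ is defined by $ v^*_B(-z)=0 $, where $ v^*_B(z)=0 $ defines the affine part of $ I^*_B $. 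This is the statement of the corollary.

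The only point deserving a brief check is the applicability of biduality to $ E_B $, but Theorem \ref{thm:algd} and its proof produce an explicit algebraic defining equation for $ E_B $ of degree at most $ d-1 $, so the classical biduality theorem for reduced plane curves applies directly. There is therefore no real obstacle; the corollary is essentially a symmetric restatement of Theorem \ref{thm:dual}, and the structural role of the sign $ -z $ on both sides reflects the same pole-polar correspondence with respect to the unit circle that underlies the proof of Theorem \ref{thm:dual}.
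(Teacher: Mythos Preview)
Your argument is correct and is precisely the reasoning the paper has in mind: the paper offers no separate proof, merely the remark ``Equivalently, the converse also holds,'' i.e.\ the corollary is the biduality reformulation of Theorem~\ref{thm:dual}. Your write-up simply makes explicit the involution $\phi$ and the identity $(\phi(C))^{*}=\phi(C^{*})$ that justify this one-line claim.
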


\begin{remark}
  As we mentioned in section $ 1 $, for $ d=3 $, the ellipse \eqref{eq:DGM}
  corresponds to the inner ellipse of Poncelet's theorem.
  For $ d\geq 4 $, Theorems {\rm \ref{thm:algd}} and {\rm \ref{thm:dual}}
  provides the defining equation of the envelope of the family of lines 
  $ \{\ell_{\lambda}\}_{\lambda\in\mathbb{D}} $,
  where $ \ell_\lambda $ is the set of all segments joining each distinct two
  preimages in $ B^{-1}(\lambda) $.
  Here, we remark that $ \ell_\lambda $ includes diagonals of the 
  $ d $-sided polygon with vertices at $ B^{-1}(\lambda) $.
  In general, the defining equation of this envelope is not always reducible, 
  but the ``outermost part'' of the curve gives the so-called Poncelet curve
  associated with the Blaschke product.
  For instance, see {\rm \cite{mirman}} 
  and {\rm \cite[Definition 5.1 and Theorem 5.2]{daepp2015}} 
  for details about definitions and related topics of Poncelet curve.
\end{remark}

\section{Examples}\label{sec:exp}

For a Blaschke product $ B $ of degree $ 4 $, 
the interior curve $ I_B $ is an ellipse if and only if 
$ B $ is a composition of two Blaschke products of degree $ 2 $.
See \cite{fuji-circum}, 
and also see \cite{gorkin2} for the relationship between this result
and the numerical range of shift perators.

Here, as an appliation of Theorem \ref{thm:dual},
we construct a Blaschke product of degree 5
whose interior curve is a union of two ellipses.

\begin{example}
  Let
  $$ 
      B_{a,b}(z)=z\frac{z^2-a}{1-a z^2}\frac{z^2-b}{1-b z^2}
      \qquad (0<a,b<1), 
  $$
  where $ a,\,b $ satisfy the equality 
  $ a^3b^3-2a^2b^2-(b^2+a^2)+3ab=0 $.

  In this case, $ E_{B_{a,b}} $ is given as follows,
  \begin{align} \notag
    E_{B_{a,b}} : & \Big(a(b+1)^2x^2+a(b-1)^2y^2-4b\Big) \\ \label{eq:ex1}
                &  \Big((a^2b^3-ab^2+2b^2+3b-a)x^2+(a^2b^3-ab^2-2b^2+3b-a)y^2
                   -4b\Big)=0,
  \end{align}
  where $ z=x+iy $.

  Therefore, the exterior curve is a union of two ellipses for every
  $ B_{a,b} $.
  Then, the interior curve is also a union of two ellipses
  because the dual curve of an irreducible conic is also 
  an irreducible conic
  and the interior curve is a compact curve in $ \mathbb{D} $. 
  See Figure {\rm \ref{pic:elip-elip1}}.

\begin{figure}[htbp]
  \centerline{
    \fbox{\includegraphics[width=0.45\linewidth]{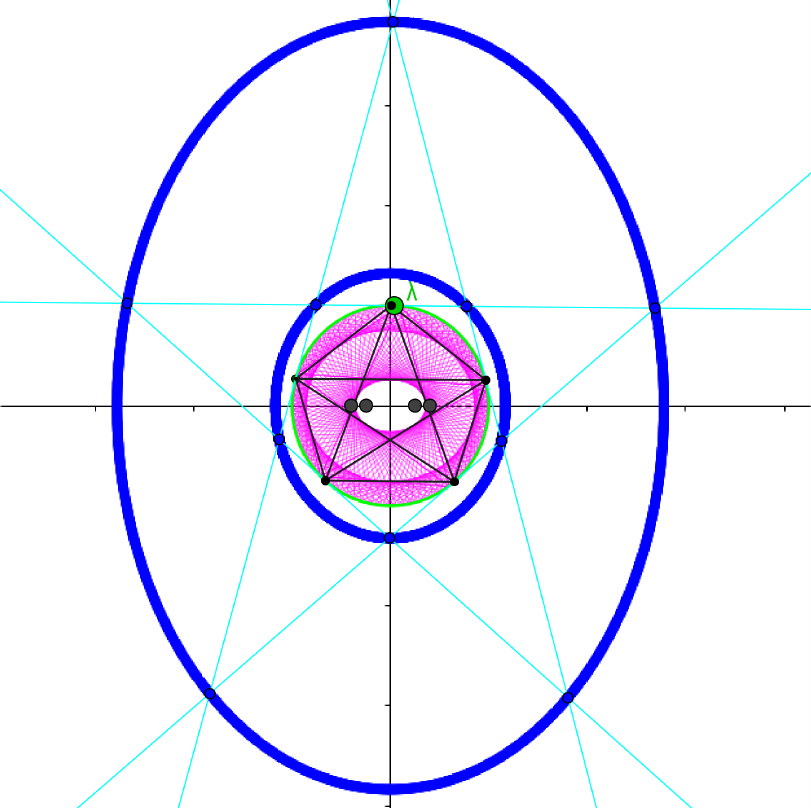}}\quad
    \fbox{\includegraphics[width=0.45\linewidth]{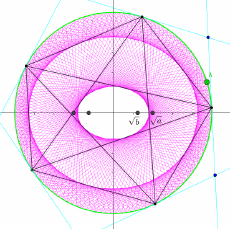}}
   }
  \caption{The thick curve indicates the exterior curve $ E_{B_{a,b}} $,
           where $ (a,b)=(0.16,0.0616)$. The envelope in 
           the right figure is the interior curves correspond 
           to the left figure.}
  \label{pic:elip-elip1}
\end{figure}

  In fact, $ I_{B_{a,b}} $ is given by, 
  $$
      I_{B_{a,b}} :  
      \Big(\frac{4b}{a(b+1)^2}x^2+\frac{4b}{a(b-1)^2}y^2-1\Big)
      \Big(\frac{4a}{b(a+1)^2}x^2+\frac{4a}{b(a-1)^2}y^2-1\Big)=0.
  $$
  The two foci are $ \pm\sqrt{a} $ (the first factor) 
  and $ \pm\sqrt{b} $ (the second factor).
\end{example}

\begin{example}
  Let
  $$ 
      B_c(z)=z\Big(\frac{z-\frac14}{1-\frac14 z}\Big)^2
           \Big(\frac{z-c}{1-c z}\Big)^2 \qquad
          (0<c<1), 
  $$
  where $ c $ is a solution of $ c^3-72c^2+48c-4=0 $.
  There are two possibilities of $ c $,
  $$ c\approx 0.0976036,\quad \mbox{or} \quad   c\approx 0.5745591. $$

  In this case, $ E_{B_c} $ is given as follows,
  \begin{align} \notag
     E_{B_c} : & \Big(4z^2+(-225\zb c+8\zb-64)z+4\zb^2
              -64\zb+256\Big) \\ \label{eq:elipelip}
           & \Big(16c^2z^2+(-257\zb c^2+(272\zb -64)c-64\zb )z
                 +16\zb^2c^2-64\zb c+64\Big)=0.
  \end{align}
  In the case of $ c\approx 0.0976036 $,
  \eqref{eq:elipelip} is a union of two ellipses.
  See {\rm Figure \ref{pic:elip-elip}}.

\begin{figure}[htbp]
  \centerline{
    \fbox{\includegraphics[width=0.45\linewidth]{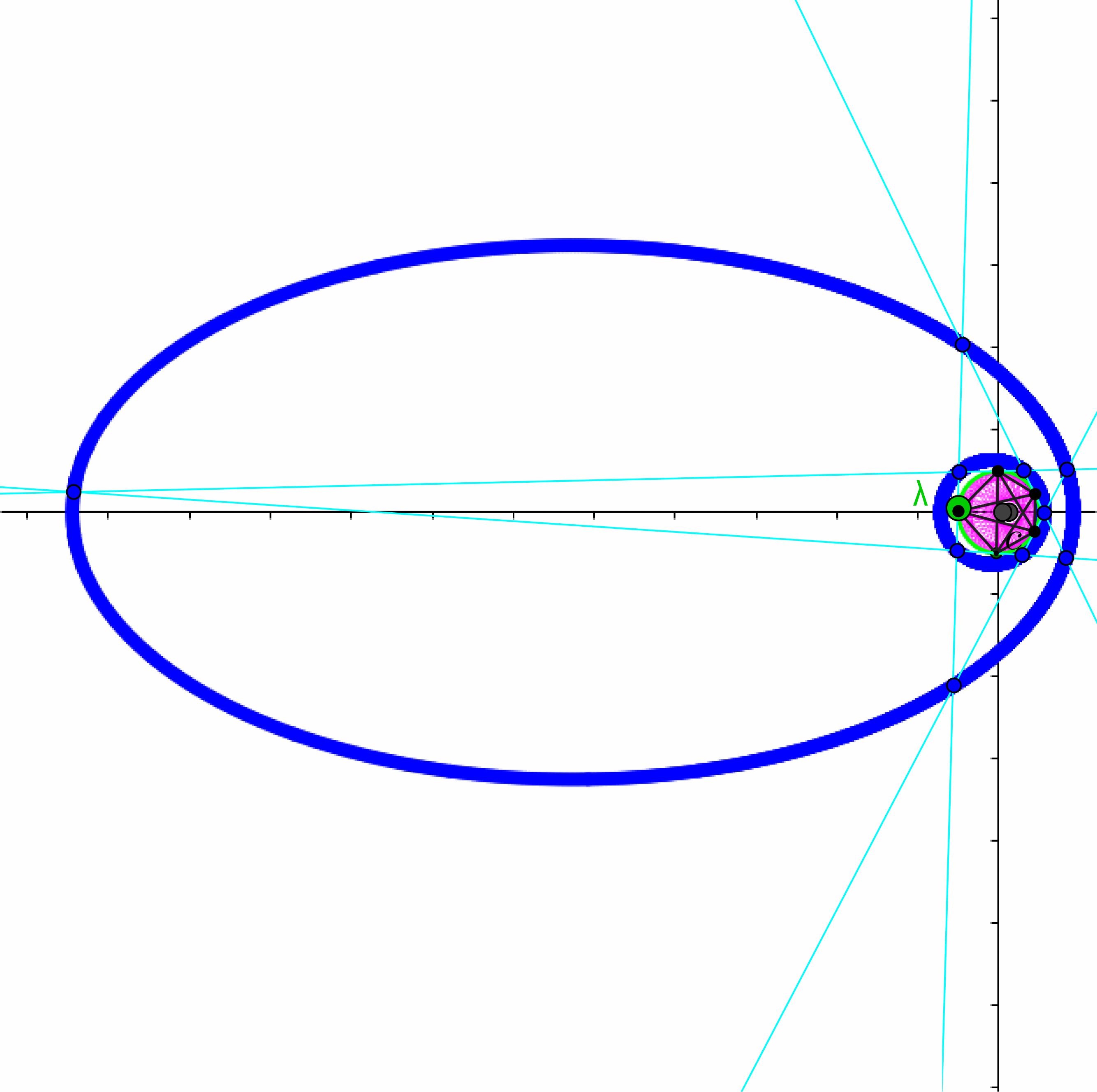}}\quad
    \fbox{\includegraphics[width=0.45\linewidth]{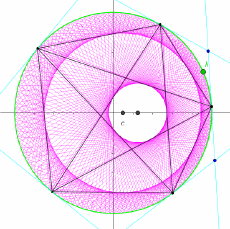}}
   }
  \caption{The thick curve indicates the exterior curve 
           for $ B_c $ with $ c\approx 0.0976036 $.
           The envelope  in the right figure is the interior curve.}
  \label{pic:elip-elip}
\end{figure}

\begin{figure}[htbp]
  \centerline{
    \fbox{\includegraphics[width=0.45\linewidth]{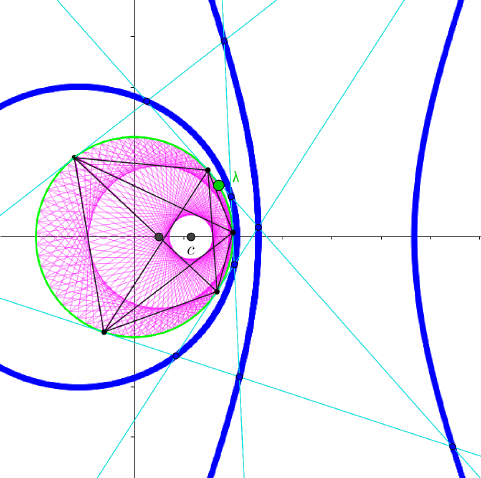}}\quad
    \fbox{\includegraphics[width=0.45\linewidth]{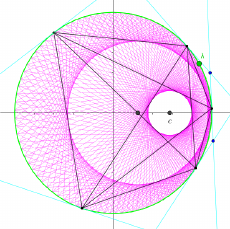}}
   }
    \caption{The thick curve indicates the exterior curve
             for $ B_c $ with $ c\approx 0.5745591 $.
             The envelope  in the right figure is the interior curve.}
  \label{pic:elip-hyp}
\end{figure}

  The other case, \eqref{eq:elipelip} is a union of an ellipse and
  a hyperbola. See {\rm Figure \ref{pic:elip-hyp}}.
  In any case, the interior curve should be a union of two ellipses.

  In fact, the interior curve is the union of the following two circles,
  $$ 
      \Big|z-\frac14\Big|=\frac{15}{16}\sqrt{c}, \quad \mbox{and}\quad
      \big|z-c\big|=\frac18(17c-8).
  $$
\end{example}

\bibliographystyle{alpha}
\bibliography{fuji-ref}

\end{document}